\documentclass{article}

\usepackage{arxiv}

\usepackage{amsmath, cite, amssymb, amsthm}
\usepackage[utf8]{inputenc} 
\usepackage[T1]{fontenc}    
\usepackage{hyperref}       
\usepackage{url}            
\usepackage{amsfonts}       
\usepackage{graphicx}
\usepackage{xcolor,comment}
\usepackage{indentfirst}
\usepackage{subcaption}
\usepackage{float} 
\numberwithin{equation}{section}

\newtheorem{statement}{Statement}[section]
\newtheorem{proposition}{Proposition}[section]
\newtheorem{remark}{Remark}[section]

\title{Analytical and Reduced-Order Modeling of a Timoshenko Beam with Point Actuation}

\author{
        A.~Zuyev$^{1,2}$ \\
        {\small zuyev@mpi-magdeburg.mpg.de} \\
        {\bf Ju.~Kalosha}$^{2}$ \\
        {\small julykucher@gmail.com} \\
        {\small $^1$Max Planck Institute for Dynamics of Complex Technical Systems} \vspace{-1ex} \\
        {\small Sandtorstrasse~1, Magdeburg, Germany 39106} \\
        {\small $^2$Institute of Applied Mathematics and Mechanics, National Academy of Sciences of Ukraine} \vspace{-1ex} \\
        {\small Shevchenko blvd. 79, Cherkasy, Ukraine 18031} \\
       }

\begin{document}
\newcommand{\dd}[1]{\mathrm{d}#1} 
\newcommand{\MM}{\mathbb M}
\newcommand{\LL}{\mathbb L}
\newcommand{\VV}{\mathbb V}
\newcommand{\WW}{\mathbb W}
\newcommand{\bX}{\mathbf X}
\newcommand{\bY}{\mathbf Y}
\newcommand{\bE}{\mathbf E}
\newcommand{\bA}{\mathbf A}
\newcommand{\bB}{\mathbf B}
\newcommand{\bC}{\mathbf C}
\renewcommand{\arraystretch}{1.4}
\maketitle

\begin{abstract}
    We present frequency-domain models of a controlled mechanical system consisting of a flexible beam and a rigid body. The transfer function for the beam, governed by the Timoshenko equations under the body-beam interface conditions, is derived analytically. Based on this infinite-dimensional representation, reduced-order models are constructed within the framework of Loewner matrices. A comparative analysis of the Bode plots is presented for the beam model using different choices of sensor-actuator pairs to validate the proposed frequency-domain modeling framework.
\end{abstract}

\keywords{Timoshenko beam \and viscous damping \and transfer function \and Loewner matrix \and reduced-order modeling \and Bode plot}

\textbf{MSC:} 93C20, 93C80, 93B15, 74H45, 74K10.

\section{Introduction}\label{sec:Intro}

The study of the motion of complex mechanical systems~--- in particular, industrial structures  comprising elastic and rigid components~--- often relies on mathematical models formulated as infinite-dimensional systems in Hilbert or Banach spaces. The associated representations in the frequency domain encompass transfer functions that are not rational. Therefore, in the context of practical applications, such as parameter identification, model order reduction becomes crucially important~\cite{Antoulas2005, BBF2014, MOD2021, DO2014}.
This creates a demand for constructing reduced-order dynamical models, while taking into account the system's key characteristics in such a way that significant simplification of the model does not cause a substantial loss of accuracy. Consequently, over the past few decades, there has been rising interest in model order reduction methods for dynamical control systems.
There are several fundamental approaches to order reduction, and various iterative methods have been extensively developed.
In the context of studying control systems with an output, one of the most commonly used approaches is based on Loewner matrices~--- it provides a method for constructing a finite-dimensional model based on numerical data derived from the measurement results~\cite{ALI2017, ABG2020, IA2014}.

The present work is focused on the frequency domain modeling of a flexible beam with a lumped rigid attachment that produces a local damping effect.
The governing equations are taken in the form of the Timoshenko beam model with account of local damping.
A hybrid beam-body system was considered in the form of the Euler--Bernoulli beam equation in our previous works~\cite{zuyev2005stabilization,KZ2022, KZB2021}.

In Section~\ref{sec:TimBeamModel}, we obtain the frequency-domain representation of analytical model.
We rely on methods for constructing transfer functions for infinite-dimensional systems that were proposed, e.g., in~\cite{CuMo2009, LA1993}, see also~\cite{B2013, CuZw1995}.
An example of deriving the transfer function for the Euler--Bernoulli beam with structural damping can be found in~\cite{ZG2023}.
Derivation of the reduced-order model followed by the numeric analysis is presented in Section~\ref{sec:rom}.

A comparative frequency-domain analysis of the Euler--Bernoulli and Timoshenko beam models, including numeric verifications, was presented in authors' recent work~\cite{ZK2026PCC}, where the Timoshenko beam frequency-domain representation was obtained in the presence of some limitations on the Laplace parameter.
In the current paper, we develop a generic approach to deriving the transfer function, thus defining it explicitly in its full domain. 

For the obtained output-to-input relation, we present the reduced-order model of the considered system by means of interpolation of the original transfer function at an appropriately chosen sampling set.
The order reduction method is adopted within the scope of Loewner matrices.
The singular value decomposition technique is used to estimate the order of reduction, followed by refinement through the comparison of the relative $H^2$ and $H^\infty$ interpolation errors.

Normally it is expected that the set of all eigenvalues of the system lies entirely in the left half-plane of the complex plane, due to the damping effect.
Although, thorough investigation of the underlined beam-body system revealed two purely imaginary eigenvalues $\pm i\omega_c$, where $\omega_c$ is known as the cutoff, or transition, frequency~--- 
the threshold of the high-frequency range, where the second spectral branch of the Timoshenko beam emerges. At this specific frequency, a unique decoupling occurs, giving rise to a purely rotational mode, while for higher frequencies, the spectrum consists of two coexisting branches with distinct wave propagation characteristics.
The second spectrum was identified for a simply supported (hinged-hinged) Timoshenko beam by Traill-Nash and Collar in 1953~\cite{TNC1953} and by Dolph in 1954~\cite{D1954}, while in earlier work by Goens~\cite{G1931} on determination of the elasticity modulus of a rod that is free at both ends and suspended at its nodes, it is mentioned that the type of oscillation changes above a certain frequency threshold (at frequencies, such that the corresponding wavelengths are equal to the cross-sectional dimensions of the rod).
Over the forthcoming decades, many authors have made an effort in analysing the physical meaning of the second spectrum.
It is known that the first spectrum of the Timoshenko beam agrees with the spectrum of the Euler--Bernoulli beam while the second spectrum is often described as being associated with shear-type vibration rather than bending, and its frequencies are typically much higher than the first spectrum for the same mode number. The eigenmodes for the two branches of eigenfrequencies are fundamentally different~--- at least, this effect is easily detectable for classical simply supported beam model, where the frequency equation (or the dispersion equation in the wave-type representation) is biquadratic which yields decoupling of the roots families and mode representation in trigonometric-hyperbolic form below $\omega_c$ and purely trigonometric above $\omega_c$, see~\cite{ASH2016, Cazzani2016, Majkut2009}.

\section{Analytical Modeling of a Timoshenko Beam with a Rigid Body}\label{sec:TimBeamModel}

We study a mechanical system consisting of a simply supported flexible beam of length $\ell$ and a point attachment in the form of a spring–mass actuator.

Let us align the $x$-axis along the centerline of the beam and define the function $w(x,t)$ whose values correspond to the transversal displacement of the point with coordinate $x$ at time $t$, and the function $\psi(x,t)$ that represents the angle of rotation of the cross-section relative to the principal axis.
We assume the aforementioned functions to be of class $C^0$ w.r.t. $x$ and of class $C^2$ w.r.t. $t$.
Mechanical parameters of the outlined system are as follows:
$\rho_0$ is the volume density~--- the mass per unit volume of the beam, $A$ is the area of the cross section of the beam, $\rho=\rho_0A$ is the linear density~--- the mass per unit length,
$I$ is the cross-sectional inertia moment, $I_\rho=\rho_0I$ is the mass moment of inertia of the cross section;
the Young modulus is denoted by $E$ and the shear modulus, or modulus of rigidity,~--- by $G$; $k$ is the shear correction factor that depends on the geometry of the beam's cross-section, $K=kGA$. The beam's cross-section is assumed to be rectangular, so $k=5/6$, according to~\cite{DS2013, GT1991}.
Besides, the beam is assumed to be homogeneous and prismatic.
The beam is equipped with a shaker~--- a rigid body of mass $m$ attached at a point $\ell_0 \in (0, \ell)$ by a spring of stiffness $\varkappa$; it exerts a point force $F_0$.

An application of the variational principle to the derivation of the Timoshenko beam equations can be found, e.g., in~\cite{ZS2007}. This approach was inherited in~\cite{ZK2026PCC} for the simply-supported beam with shaker described above.
The equations of motion are presented in the following form:
\begin{equation}\label{TB.eqs}
    \begin{array}{cc}
        \begin{aligned}
            \rho \ddot w(x,t) - K (w''(x,t) - \psi'(x,t)) & = 0, \\
            I_\rho \ddot\psi(x,t) - EI \psi''(x,t) - K (w'(x,t) - \psi(x,t)) & = 0,
        \end{aligned} & \quad
        x\neq\ell_0,
    \end{array}
\end{equation}
$w(x,t) \in H^2([0,\ell]\times[t_1,t_2])$, $\psi(x,\cdot) \in H^2[t_1,t_2]$, $\forall x\in[0,\ell]$, $\psi(\cdot,t) \in H^1[0,\ell]$, $\forall t \in [t_1,t_2]$,
with the boundary conditions
\begin{equation}\label{TB.BC}
    w(0,t) = w(\ell,t) = 0, \qquad \psi'(0,t) = \psi'(\ell,t) = 0
\end{equation}
and the interface conditions
\begin{equation}\label{TB.IfC1}
    w(\ell_0-0,t) = w(\ell_0+0,t), \quad
    \psi(\ell_0-0,t) = \psi(\ell_0+0,t), \quad
    \psi'(\ell_0-0,t) = \psi'(\ell_0+0,t),
\end{equation}
\begin{equation}\label{TB.IfC2}
        F_0 - m \ddot w(\ell_0,t) - \varkappa w(\ell_0,t) - d \dot w(\ell_0,t) - K \Big( w'(\ell_0-0,t) - w'(\ell_0+0,t) \Big) = 0.
\end{equation}
Here and further, we denote the spatial derivative by prime and the time derivative by dot.
In~\eqref{TB.IfC2}, the local viscous damping at the point $\ell_0$ is proportional to the transversal velocity of motion with a coefficient $d>0$.

The total energy of the above system can be expressed as
$${\cal W} = \frac12\int\limits_0^\ell \Big( \rho\dot w^2 + I_\rho\dot\psi^2 + EI(\psi')^2 + K(w'-\psi)^2\Big)\dd x + \frac12(m\dot w^2 + \varkappa w^2)\Big|_{x=\ell_0}.$$
Calculating its time derivative along the trajectories of~\eqref{TB.eqs}--\eqref{TB.IfC2} with $F_0=0$, one obtains $\dot{\cal W} = -d(\dot w(\ell_0,t))^2$.
Thus, $\dot{\cal W}$ is a negative semidefinite functional indicating the dissipative property of the suggested damped system.
This fact gives rise to the hypothesis about the Lyapunov stability of classical solutions.

Assume that output signals generated by a sensor attached at some point $\ell_k\in[0,\ell]$ are available.
We consider two types of output: the transversal displacement $y_1(t) = w(\ell_k,t)$ of the point $\ell_k\in[0,\ell]$ of the beam, and the curvature of the beam axis at $\ell_k$.
Detailed description of the curvature models can be found in~\cite{LCR2017} and references therein. In the present paper we consider so called flexural, or mechanical, curvature in the form $y_2(t)=\psi'(\ell_k,t)$.

\subsection{Derivation of the Transfer Function}\label{sec:TF_TimBeam}

For deriving the frequency-domain representation of the dynamical system we assume zero initial conditions and apply the Laplace transform w.r.t. $t$ to input, output and unknown functions in system \eqref{TB.eqs}--\eqref{TB.IfC2}.
Let us denote the Laplace transform operator as ${\cal L}: g(x,t)\mapsto G(x,s)=\int\limits_0^\infty g(x,t) e^{-st}\dd t$, where $s\in{\mathbb C}$ is a frequency parameter.
Denoting
$U(s) = \int\limits_0^\infty F_0(t) e^{-st}\dd t$,\;
$Y(s) = \int\limits_0^\infty y(t) e^{-st}\dd t$,\;
$W(x,s) = \int\limits_0^\infty w(x,t) e^{-st}\dd t$,\;
$\Psi(x,s) = \int\limits_0^\infty \psi(x,t) e^{-st}\dd t$ with $s\in{\mathbb C}$, we obtain the following system:
\begin{equation}\label{LD.sys}
    \begin{aligned}
        & K \big(W''(x,s) - \Psi'(x,s) \big) - \rho s^2\, W(x,s) = 0, \\
        & EI\,\Psi''(x,s) + K \big(W'(x,s) - \Psi(x,s) \big) - I_\rho s^2\, \Psi(x,s) = 0,
    \end{aligned}
\end{equation}
\begin{equation}\label{LD.BC}
    W(0,s) = W(\ell,s) = 0, \quad
    \Psi'(0,s) = \Psi'(\ell,s) = 0,
\end{equation}
\begin{equation}\label{LD.IfC}
    \begin{aligned}
        W(\ell_0-0,s) = W(\ell_0+0,s), \quad \Psi(\ell_0-0,s) = \Psi(\ell_0+0,s), \quad \Psi'(\ell_0-0,s) = \Psi'(\ell_0+0,s), \\
        K\,\big(W'(\ell_0-0,s) - W'(\ell_0+0,s)\big) + (ms^2 + \varkappa + d s)\, W(\ell_0,s) = U(s).
    \end{aligned}
\end{equation}

\begin{remark}\label{st:ress}
If $s$ belongs to the resolvent set of the differential operator corresponding to problem~\eqref{TB.eqs}--\eqref{TB.IfC2}, then the boundary-value problem~\eqref{LD.sys}--\eqref{LD.IfC} has a unique solution.
\end{remark}

\begin{proof}[Sketch of proof]
    PDE problem~\eqref{TB.eqs}--\eqref{TB.IfC2} can be represented in the operator form
    \begin{equation*}
        \dot\xi={\rm A}\xi
    \end{equation*}
    in an appropriately chosen Hilbert space ${\cal H}$. Here $\xi=\begin{pmatrix} u, & v, & f, & g, & p, & q \end{pmatrix}^\top$ is the state vector with components
    $u(x)=w(\cdot,t)$, $v(x)=\dot w(\cdot,t)$, $f(x)=\psi(\cdot,t)$, $g(x)=\dot\psi(\cdot,t)$, $p=w(\ell_0,t)$, $q=\dot w(\ell_0,t)$.
    A second-order linear differential operator ${\rm A}$ establishes~\eqref{TB.eqs},~\eqref{TB.IfC2} and its domain incorporates conditions~\eqref{TB.BC}--\eqref{TB.IfC1}.
    
    Applying the Laplace transform to the state equation under zero initial conditions yields
    \begin{equation}\label{ressLT}
        ({\rm A}-s{\rm I})\hat\xi=0,
    \end{equation}
    providing it with corresponding boundary conditions. Here $\hat\xi(s)$ denotes the Laplace transform of $\xi(t)$.
    If $s$ belongs to the resolvent set of ${\rm A}$, then the operator ${\rm A}-s{\rm I}$ is invertible, implying that the corresponding boundary-value problem admits a unique solution.
\end{proof}

In what follows, we present a method of deriving the transfer function for system~\eqref{LD.sys}--\eqref{LD.IfC}.

Denote $W_0=W(x,s)$, $W_1=W'(x,s)$, $\Psi_0=\Psi(x,s)$, $\Psi_1=\Psi'(x,s)$, $\bar W = \begin{pmatrix} W_0, & W_1, & \Psi_0, & \Psi_1 \end{pmatrix}^\top$, and rewrite system~\eqref{LD.sys} in matrix form:
\begin{equation}\label{LD.sys.gen}
    \frac{\dd}{\dd x} \bar W = {\cal A} \bar W
\end{equation}
with
\begin{equation*}
    {\cal A}(s) = \left(\begin{array}{cccc}
        0 & 1 & 0 & 0 \\
        a & 0 & 0 & 1 \\
        0 & 0 & 0 & 1 \\
        0 & b & c & 0
    \end{array}\right), \quad
a(s) = \frac{\rho s^2}{K}, \quad b(s) = -\frac{K}{EI}, \quad c(s) = \frac{I_\rho s^2}{EI}-b(s).
\end{equation*}

Here and further, we will use the notation ${\cal E}^x$ for the matrix exponential $e^{x{\cal A}}(s)$. The solution of~\eqref{LD.sys.gen} is expressed in the following form:
\begin{equation*}
    \bar W(x,s) = \left\{\begin{array}{ll}
        {\cal E}^x\, \bar W^0,           & x\in[0,\ell_0], \\
        {\cal E}^{x-\ell}\, \bar W^\ell, & x\in(\ell_0,\ell],
    \end{array}\right.
\end{equation*}
where $\bar W^0 = \bar W(0,s) = \begin{pmatrix} W_0^0, & W_1^0, & \Psi_0^0, & \Psi_1^0 \end{pmatrix}^\top$, $\bar W^\ell = \bar W(\ell,s) = \begin{pmatrix} W_0^\ell, & W_1^\ell, & \Psi_0^\ell, & \Psi_1^\ell \end{pmatrix}^\top$.
Boundary conditions are $W_0^0=W_0^\ell=0$, $\Psi_1^0=\Psi_1^\ell=0$, and interface conditions result in nonhomogeneous algebraic system
\begin{equation}\label{LD.IfC.sys.gen}
    {\cal M}\begin{pmatrix} W_1^0, & \Psi_0^0, & W_1^\ell, & \Psi_0^\ell \end{pmatrix}^\top = \begin{pmatrix} 0, & 0, & 0, & U(s) \end{pmatrix}^\top
\end{equation}
with matrix
\begin{equation}\label{LD.IfC.sysM.gen}
    {\cal M}(s) = \left(\begin{array}{cccc}
        {\cal E}_{12}^{\ell_0} & {\cal E}_{13}^{\ell_0} & -{\cal E}_{12}^{\ell_0-\ell}  & -{\cal E}_{13}^{\ell_0-\ell} \\
        {\cal E}_{32}^{\ell_0} & {\cal E}_{33}^{\ell_0} & -{\cal E}_{32}^{\ell_0-\ell}  & -{\cal E}_{33}^{\ell_0-\ell} \\
        {\cal E}_{42}^{\ell_0} & {\cal E}_{43}^{\ell_0} & -{\cal E}_{42}^{\ell_0-\ell}  & -{\cal E}_{43}^{\ell_0-\ell} \\
        {\cal M}_{41}          & {\cal M}_{42}          & -K{\cal E}_{22}^{\ell_0-\ell} & -K{\cal E}_{23}^{\ell_0-\ell}
    \end{array}\right),
\end{equation}
where ${\cal M}_{41} = K{\cal E}_{22}^{\ell_0}+(ms^2+\varkappa+ds){\cal E}_{12}^{\ell_0}$,\; ${\cal M}_{42} = K{\cal E}_{23}^{\ell_0}+(ms^2+\varkappa+ds){\cal E}_{13}^{\ell_0}$.

The matrix ${\cal M}(s)$ arises from enforcing the interface conditions on the general solution of the spatial ODE system~\eqref{ressLT}.
If $s$ belongs to the resolvent set of the operator ${\rm A}$, then according to Remark~\ref{st:ress}, the boundary-value problem~\eqref{TB.eqs}--\eqref{TB.IfC2} admits a unique solution.
Consequently, the nonhomogeneous algebraic system~\eqref{LD.IfC.sys.gen} has a unique solution, and therefore the matrix ${\cal M}$ is nonsingular.

Let ${\cal M}^{-1}$ be the matrix inverse to ${\cal M}$. Then the unique solution of boundary-value problem~\eqref{LD.sys}--\eqref{LD.IfC} is presented as
\begin{equation*}
    W(x,s) = \left\{\begin{array}{ll}
        {\cal E}^x_{12} {\cal M}^{-1}_{14} U + {\cal E}^x_{13} {\cal M}^{-1}_{24} U,               & x\in[0,\ell_0], \\
        {\cal E}^{x-\ell}_{12} {\cal M}^{-1}_{34} U + {\cal E}^{x-\ell}_{13} {\cal M}^{-1}_{44} U, & x\in(\ell_0,\ell],
    \end{array}\right.
\end{equation*}
\begin{equation*}
    \Psi(x,s) = \left\{\begin{array}{ll}
        {\cal E}^x_{32} {\cal M}^{-1}_{14} U + {\cal E}^x_{33} {\cal M}^{-1}_{24} U,               & x\in[0,\ell_0], \\
        {\cal E}^{x-\ell}_{32} {\cal M}^{-1}_{34} U + {\cal E}^{x-\ell}_{33} {\cal M}^{-1}_{44} U, & x\in(\ell_0,\ell].
    \end{array}\right.
\end{equation*}

The transfer function is calculated as the output-to-input relation, $H(s) = \frac{Y(s)}{U(s)}$.
Taking the output signal in the form $Y_1(s)=W(\ell_k,s)$, or $Y_2(s)=\Psi'(\ell_k,s)$, we obtain the frequency-domain representation of infinite-dimensional system as proposed below.

\begin{proposition}\label{prp:TF.gen}
    Let $S=\{s\in{\mathbb C}\mid \det(M(s))\neq0\}$.
    Then, the transfer functions $H_1(s)$ and $H_2(s)$ of the control system~\eqref{TB.eqs}--\eqref{TB.IfC2}
    with the outputs $y_1(t) = w(\ell_k,t)$ and $y_2(t) = \psi'(\ell_k,t)$, respectively, are defined for all $s\in S$ as follows:
    $$H_1(s) = \left\{\begin{array}{ll}
        {\cal E}_{12}^{\ell_k}\, {\cal M}^{-1}_{14} + {\cal E}_{13}^{\ell_k}\, {\cal M}^{-1}_{24},           &\;\; \ell_k\in[0,\ell_0], \\
        {\cal E}_{12}^{\ell_k-\ell}\, {\cal M}^{-1}_{34} + {\cal E}_{13}^{\ell_k-\ell}\, {\cal M}^{-1}_{44}, &\;\; \ell_k\in(\ell_0,\ell],
        \end{array}\right.$$
    $$H_2(s) = \left\{\begin{array}{ll}
        {\cal E}_{42}^{\ell_k}\, {\cal M}^{-1}_{14} + {\cal E}_{43}^{\ell_k}\, {\cal M}^{-1}_{24},           &\;\; \ell_k\in[0,\ell_0], \\
        {\cal E}_{42}^{\ell_k-\ell}\, {\cal M}^{-1}_{34} + {\cal E}_{43}^{\ell_k-\ell}\, {\cal M}^{-1}_{44}, &\;\; \ell_k\in(\ell_0,\ell].
        \end{array}\right.$$
    Here, ${\cal M}_{j4}^{-1}$, $j=\overline{1,4}$, are the entries of the matrix inverse to~\eqref{LD.IfC.sysM.gen}.
\end{proposition}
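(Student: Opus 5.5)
The plan is to reduce the boundary-value problem~\eqref{LD.sys}--\eqref{LD.IfC} to the linear algebraic system~\eqref{LD.IfC.sys.gen}, solve it by Cramer's rule (equivalently, with ${\cal M}^{-1}$) for $s\in S$, and then evaluate the output at $\ell_k$.

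\textbf{Step 1: general solution on each subinterval.} Fix $s\in S$. Any solution of~\eqref{LD.sys} on $[0,\ell_0]$ is $\bar W(x,s)={\cal E}^x\bar W^0$, and on $(\ell_0,\ell]$ it is $\bar W(x,s)={\cal E}^{x-\ell}\bar W^\ell$, since~\eqref{LD.sys.gen} is a linear ODE with constant coefficients. The boundary conditions~\eqref{LD.BC} give $W_0^0=W_0^\ell=0$ and $\Psi_1^0=\Psi_1^\ell=0$. Hence only the columns $2$ and $3$ of ${\cal E}^x$ and ${\cal E}^{x-\ell}$ contribute. Explicitly,
$$W(x,s)={\cal E}^x_{12}W_1^0+{\cal E}^x_{13}\Psi_0^0 \quad\text{and}\quad \Psi'(x,s)={\cal E}^x_{42}W_1^0+{\cal E}^x_{43}\Psi_0^0 \quad\text{on } [0,\ell_0],$$
with the analogous formulas on the right piece in terms of $W_1^\ell,\Psi_0^\ell$.

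\textbf{Step 2: interface conditions.} I will substitute these expressions into~\eqref{LD.IfC}.
\begin{itemize}
\item Continuity of $W$, $\Psi$ and $\Psi'$ at $\ell_0$ gives the first three rows of ${\cal M}$, namely rows $1,3,4$ of ${\cal E}^{\ell_0}$ and $-{\cal E}^{\ell_0-\ell}$ restricted to columns $2,3$.
\item The jump condition uses $W'=W_1$, which is row $2$, together with $W(\ell_0)$ expressed through the left-side formula. This yields the fourth row, with entries ${\cal M}_{41}$, ${\cal M}_{42}$, $-K{\cal E}_{22}^{\ell_0-\ell}$ and $-K{\cal E}_{23}^{\ell_0-\ell}$.
\end{itemize}
This verifies~\eqref{LD.IfC.sys.gen}--\eqref{LD.IfC.sysM.gen}. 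Because the two descriptions are equivalent, the solutions of the BVP are in bijection with the solutions of~\eqref{LD.IfC.sys.gen}.

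\textbf{Step 3: solving and forming the transfer functions.} For $s\in S$ the matrix ${\cal M}$ is invertible. Since the right-hand side is $(0,0,0,U)^\top$, the unknown vector is $U$ times the fourth column of ${\cal M}^{-1}$, that is,
$$W_1^0={\cal M}^{-1}_{14}U,\qquad \Psi_0^0={\cal M}^{-1}_{24}U,$$
and similarly for $W_1^\ell,\Psi_0^\ell$. Inserting these into the formulas of Step~1 at $x=\ell_k$ gives $Y_1=W(\ell_k,s)$ and $Y_2=\Psi'(\ell_k,s)=(\bar W)_4(\ell_k)$ as $U$ times the stated expressions. Dividing by $U$, which is admissible because the map is linear in $U$, yields $H_1$ and $H_2$. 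The case split $\ell_k\in[0,\ell_0]$ versus $(\ell_0,\ell]$ follows from which representation is valid at $\ell_k$. At $\ell_k=\ell_0$ the two representations agree by continuity, and the left one is used.

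\textbf{Main obstacle.} The only delicate point is that $S$ is defined through $\det{\cal M}\neq0$ rather than through the resolvent set, so uniqueness must be taken from the algebra, not from Remark~\ref{st:ress}. I would argue that Step~2 shows directly that the BVP is uniquely solvable exactly when ${\cal M}$ is nonsingular. This makes $H$ well defined on $S$, and $S$ contains the resolvent set. One should also note that the Laplace transforms exist for $\operatorname{Re}s$ large, and that the formulas then extend to all of $S$ as meromorphic functions, since the entries of ${\cal E}^x$ are entire in $s$.
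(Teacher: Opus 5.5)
Your proposal is correct and follows essentially the same route as the paper: matrix-exponential solutions on each subinterval, the boundary conditions eliminating columns $1$ and $4$, the interface conditions producing ${\cal M}$, and the transfer functions read off from the fourth column of ${\cal M}^{-1}$ evaluated at $x=\ell_k$. You also observe that invertibility of ${\cal M}$ on $S$ holds by definition, whereas the paper's appeal to Remark~\ref{st:ress} only shows that the resolvent set lies in $S$; this is a small sharpening, not a different argument.
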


\subsection{Poles on the Imaginary Axis}

Although the matrix~${\cal E}^x$ can, in general, be evaluated numerically, its entries can be expressed analytically in several important special cases. For example, if $s=0$, the matrix $x{\cal A}$ becomes nilpotent of index $4$, which yields the following matrix expression:
\begin{equation*}
    {\cal E}^x = \left(\begin{array}{cccc}
        1 & x+\frac{bx^3}6 & -\frac{bx^3}6  & \frac{x^2}2 \\
        0 & 1+\frac{bx^2}2 & -\frac{bx^2}2  & x \\
        0 & \frac{bx^2}2   & 1-\frac{bx^2}2 & x \\
        0 & bx             & -bx            & 1
    \end{array}\right)
\end{equation*}
and the boundary matrix for $s=0$ is as follows:
\begin{equation*}
    {\cal M} = \left(\begin{array}{cccc}
        \ell_0(1+\frac{b}{6}\ell_0^2)                                   & -\frac{b}{6}\ell_0^3                               & -(\ell_0-\ell)(1+\frac{b}{6}(\ell_0-\ell)^2) & \frac{b}{6}(\ell_0-\ell)^3 \\
        \frac{b}{2}\ell_0^2                                             & 1-\frac{b}{2}\ell_0^2                              & -\frac{b}{2}(\ell_0-\ell)^2                  & -1+\frac{b}{2}(\ell_0-\ell)^2 \\
        b\ell_0                                                         & -b \ell_0                                          & -b(\ell_0-\ell)                              & b(\ell_0-\ell) \\
        K(1+\frac{b}{2}\ell_0^2)+\varkappa\ell_0(1+\frac{b}{6}\ell_0^2) & -K\frac{b}{2}\ell_0^2-\varkappa\frac{b}{6}\ell_0^3 & -K(1+\frac{b}{2}(\ell_0-\ell)^2)             & K\frac{b}{2}(\ell_0-\ell)^2 
    \end{array}\right),
\end{equation*}
${\rm rank}({\cal M})=4$; \;
$\det({\cal M}) = -\frac{b\ell}3 \Big( 3K\ell - \varkappa\ell_0(\ell_0-\ell) \big( b\ell_0(\ell_0-\ell)+3 \big) \Big)$.

As mentioned in Section~\ref{sec:Intro}, the underlined beam-body system possesses two purely imaginary eigenvalues $\pm i\omega_c$, where $\omega_c = \sqrt\frac{K}{I_\rho}$ is the cutoff frequency.
This observation implies so called marginal stability of the considered locally damped system.
Note that for a particular mechanical setup with configuration~\eqref{mechpar} these eigenvalues $s \approx \pm 3.2394\cdot10^6\,i$ correspond to physical frequency approximately $0.52$~MHz.

\begin{statement}\label{re:ImEigs}
    The transfer functions $H_1(s)$ and $H_2(s)$ from Proposition~\ref{prp:TF.gen} possess poles at $s=\pm i\omega_c$.
\end{statement}

\begin{proof}
    For resolving~\eqref{LD.sys} in the cases $s=\pm i\sqrt{K/I_\rho}$, we denote $\zeta_1=\sqrt{KI_\rho+\rho EI}$, $\zeta_2=\sqrt{I_\rho EI}$ and $\tilde x=\zeta_1/\zeta_2\, x$. Then
    \begin{equation*}
        {\cal E}^x = \frac1{\zeta_1^2} \left(\begin{array}{cccc}
            KI_\rho+\rho EI\cos\tilde x                  & \zeta_1\zeta_2\sin\tilde x                  & 0         & -\zeta_2^2(\cos\tilde x-1) \\
            -\rho EI\frac{\zeta_1}{\zeta_2}\sin\tilde x  & \zeta_1^2\cos\tilde x                       & 0         & \zeta_1\zeta_2\sin\tilde x \\
            K\rho(x-\frac{\zeta_2}{\zeta_1}\sin\tilde x) & KI_\rho(\cos\tilde x-1)                     & \zeta_1^2 & \rho EIx+KI_\rho\frac{\zeta_2}{\zeta_1}\sin\tilde x \\
            -K\rho(\cos\tilde x-1)                       & -KI_\rho\frac{\zeta_1}{\zeta_2}\sin\tilde x & 0         & \rho EI+KI_\rho\cos\tilde x
        \end{array}\right).
    \end{equation*}
    Since the third column does not depend on $x$, the second and fourth columns $\begin{pmatrix} 0, & \pm1, & 0, & 0 \end{pmatrix}^\top$ of the boundary matrix ${\cal M}$ are linearly dependant, therefore $\det({\cal M})=0$.
\end{proof}

\section{Reduced-Order Modeling and Numerical Analysis}\label{sec:rom}
In this section, we present the results of application of the Loewner matrix framework to the considered model of a hybrid system.
We rely on the reduction procedure described in~\cite{ZG2023} for the Euler–Bernoulli beam model with structural damping.

Let us fix a natural number $P$ and a frequency $\nu_{max}$, define the interval $[-2\pi i \nu_{max};\, 2\pi i \nu_{max}]$ on the imaginary axis and partition it into nodes $\omega_l$, where $l=\overline{1,2P}$, thus generating a sequence of uniformly spaced points. In doing so, we ensure that the point $0$ is not included into this partition.

We assume that experimental measurements are given as a set of values of the transfer function $H(s)$ at the specified nodes: let $\mu_i=\omega_{2i-1}$, $\lambda_i=\omega_{2i}$, $i=\overline{1,P}$, and suppose that values $v_i=H(\mu_i)$, $w_i=H(\lambda_i)$ are known and the sets
$\left\{ (\lambda_j, w_j) \right\}_{j=1}^P$ and $\left\{ (\mu_i, v_i) \right\}_{i=1}^P$
are disjoint.
The scheme of data splitting used here is referred to as alternate splitting, see, e.g.,~\cite{KGA2021} where other types of sampling nodes arrangement are analyzed.

The goal is to construct a rational function $\tilde H(s)$ whose values coincide with the given values of $H(s)$ at the corresponding nodes, i.e., $\tilde H(\mu_i)=v_i$, $\tilde H(\lambda_j)=w_j$. To do this, we implement the representation using Loewner matrices, that in this case are complex-valued matrices of dimension $P\times P$ with elements of the form
\begin{equation*}
    \LL_{ij} = \frac{v_i-w_j}{\mu_i-\lambda_j}, \quad \MM_{ij} = \frac{\mu_i v_i-\lambda_j w_j}{\mu_i-\lambda_j}.
\end{equation*}
In addition, a real-valued column vector $\VV$ and a real-valued row vector $\WW$ of dimension $P$ are composed of the elements $v_i$ and $w_j$, respectively.
Next, we determine the order $r$ of the reduced system by applying the singular value decomposition (SVD) of the matrix pencil:
$[\LL, \MM] = \bY \Sigma_1 \bar\bX$, where $\bY \in {\mathbb C}^{P\times P}$, $\bar\bX \in {\mathbb C}^{2P\times 2P}$, and $\Sigma_1$ is a $P\times 2P$-matrix of the real singular values of the pencil, sorted decreasingly. Similarly, from the decomposition
$\left[\begin{array}{c} \LL \\[-2pt] \MM \end{array}\right] = \bar\bY \Sigma_2 \bX$ we obtain matrices
$\bar\bY \in {\mathbb C}^{2P\times 2P}$, $\bX \in {\mathbb C}^{P\times P}$ and $\Sigma_2 \in {\mathbb R}^{2P\times P}$.

Computations confirm that it is possible to choose a number $r$ such that, starting from this number, the rate of decrease of the singular values of the pencil becomes small compared to the difference between adjacent values with numbers less than $r$, while the divergence between $\Sigma_1$ and $\Sigma_2$ is negligible.
Since the original system is infinite-dimensional, usually all singular values are non-zero.
In a good scenario, we observe the rapid decrease of singular values of $[\LL, \MM]$ up to some number, after which all singular values are close to zero, whereas in other situations the picture becomes less sharp, although it is still possible to estimate the number of key degrees of freedom of the system by comparing approximation errors for the estimated range of $r$.
Note that there is a risk of overfitting when $r$ is too large.
At the end, the choice of the optimal in some sense $r$ is a trade-off between accuracy and large-scaling and it greatly relies on the experimentation.
In this work, we do not define strict criterion of the optimal choice of the reduction order $r$, instead we calculate and compare the relative and maximal errors of the interpolation for the estimated range of acceptable values of $r$.
Next, we construct the reduced system of order $r$. For this purpose, we form $\bY_r$ and $\bX_r$~--- submatrices of dimensions $P\times r$ and $r\times P$ of the matrices $\bY$ and $\bX$, respectively, and then compute matrices of the form
$$\bE = -\bX_r \LL \bY_r, \quad \bA = -\bX_r \MM \bY_r, \quad \bB = \bX_r \VV, \quad \bC = \WW \bY_r.$$
The desired function $\tilde H(s) = \bC (s \bE - \bA)^{-1} \bB$ is an approximation of the transfer function $H(s)$ of the original infinite-dimensional system.

For the numeric simulations, we use the following values of mechanical parameters:
\begin{equation}\label{mechpar}
    \begin{aligned}
        & \ell=1.89~{\rm m}, \; \ell_0=1.378~{\rm m}, \; A=2.25~{\rm cm^2}, \; m=0.1~{\rm kg}, \; \varkappa=7~{\rm N/mm}, \\
        & \rho_0=2700~{\rm kg/m^3}, \; E=69~{\rm GPa}, \; G=25.5~{\rm GPa}, \; I=1.6875\cdot10^{-10}~{\rm m^4}.
    \end{aligned}
\end{equation}
The sampling range here is determined by $\nu_{max}=250$~Hz, and $P=1000$, so that the corresponding segment of the imaginary axis is partitioned by $2000$ points.

In what follows, we present graphic results: Figure~\ref{fig:co_y1_svd} demonstrates the decay of singular values of pencil $[\LL, \MM]$ for the system with collocated actuator-sensor pair, i.e. $\ell_k=\ell_0$, displacement-type output, under different choices of the damping coefficient.
Figures~\ref{fig:co_d025}--\ref{fig:nc_y2}, obtained for systems with different input-output and damping configurations specified in each figure's title, depict magnitude Bode plots of the original transfer function $H(s)$ and its Loewner approximation $\tilde H(s)$ of order $r$, $s=2i\pi\nu$, $\nu\in[0, \nu_{max}]$.
Unlike in Proposition~\ref{prp:TF.gen}, we omit subscripts here, using the notation $H$ to denote the function $H_1$ for the case of input-output collocation, and $H_2$ for non-collocation, specifying the corresponding configuration in the figure title.
When considering the non-collocation, we take $\ell_k=0.7325$~m.

Additionally, we display 
the maximal approximation error $E_{max} = \max|H(s)-\tilde H(s)|$ over the sampled frequency range, which is treated as the discrete analogue of the $H^\infty$-norm.

\begin{figure}[!h]
    \begin{minipage}[b]{\linewidth} \centering
        \begin{subfigure}{0.49\linewidth} \centering
            \includegraphics[width=\linewidth]{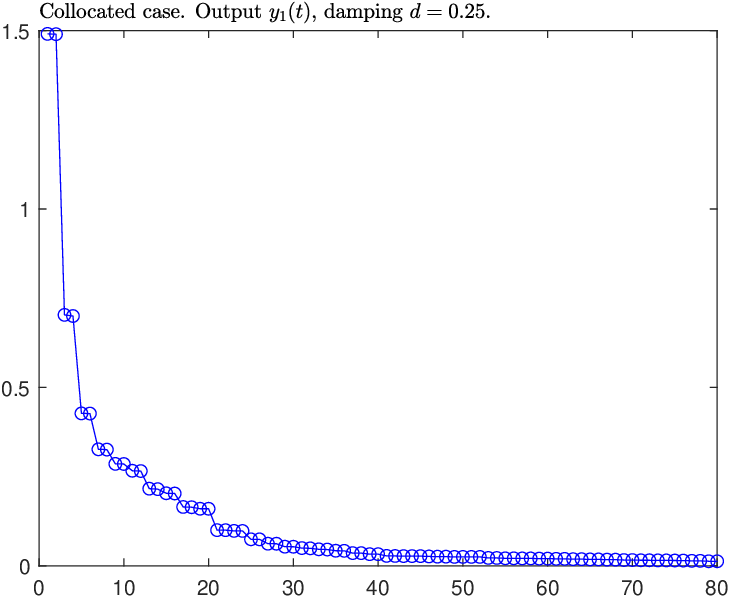}
            \caption{$d=0.25$}\label{fig:co_y1_d025_svd}
        \end{subfigure} \hfill
        \begin{subfigure}{0.49\linewidth} \centering
            \includegraphics[width=\linewidth]{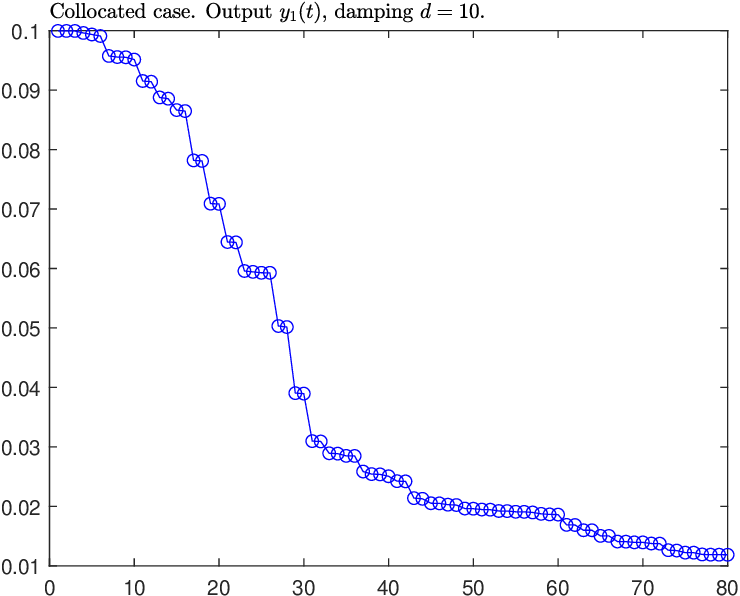}
            \caption{$d=10$}\label{fig:co_y1_d10_svd}
        \end{subfigure}
    \end{minipage} \vspace{-1ex}
    \caption{Singular value decay. Collocated configuration with output $y_1(t)$: $d=0.25$ (left) and $d=10$ (right).}\label{fig:co_y1_svd}
\end{figure}

\begin{figure}[!h]
    \begin{minipage}[b]{\linewidth} \centering
        \begin{subfigure}{0.49\linewidth} \centering
            \includegraphics[width=\linewidth]{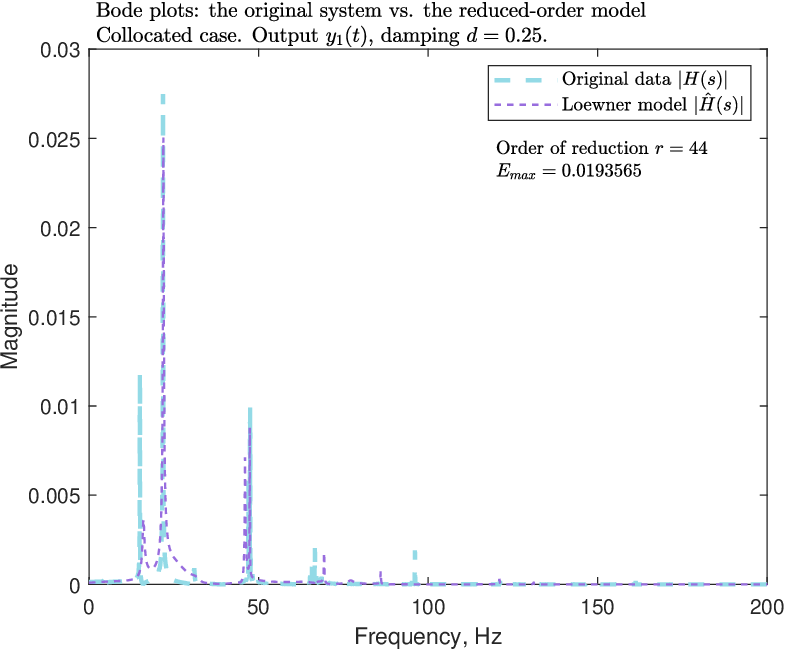}
            \caption{Output $y_1(t)$}\label{fig:co_y1_d025_r44}
        \end{subfigure} \hfill
        \begin{subfigure}{0.49\linewidth} \centering
            \includegraphics[width=\linewidth]{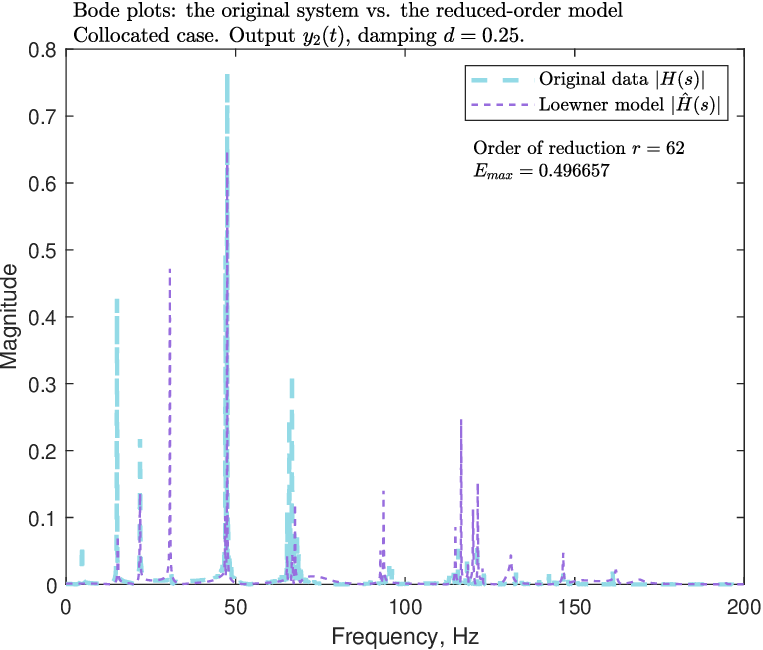}
            \caption{Output $y_2(t)$}\label{fig:co_y2_d025_r62}
        \end{subfigure}
    \end{minipage} \vspace{-1ex}
    \caption{Bode plots for the collocated configuration with outputs $y_1(t)$ (left) and $y_2(t)$ (right).}\label{fig:co_d025}
\end{figure}

\begin{figure}[!h]
    \begin{minipage}[b]{\linewidth} \centering
        \begin{subfigure}{0.49\linewidth} \centering
            \includegraphics[width=\linewidth]{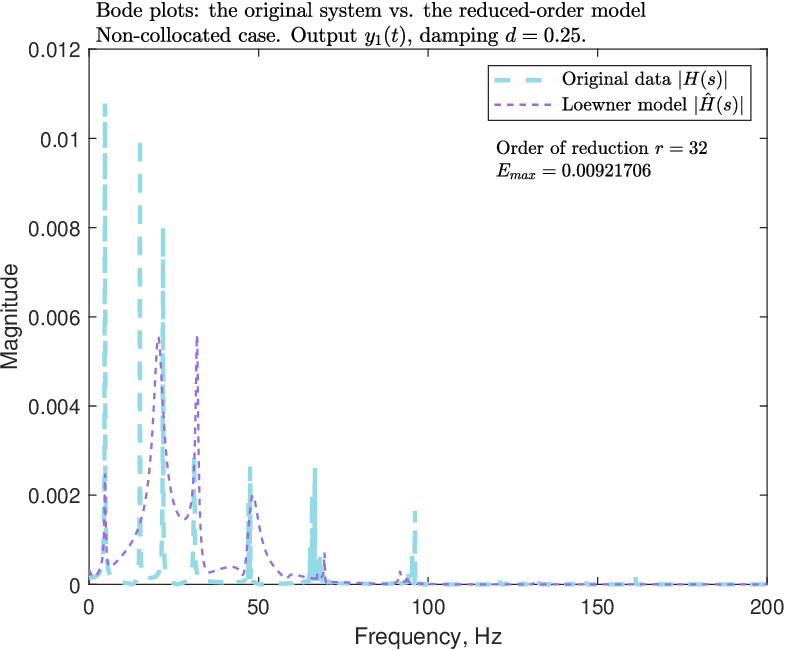}
            \caption{$d=0.25$}\label{fig:nc_y1_d025_r32}
        \end{subfigure} \hfill
        \begin{subfigure}{0.49\linewidth} \centering
            \includegraphics[width=\linewidth]{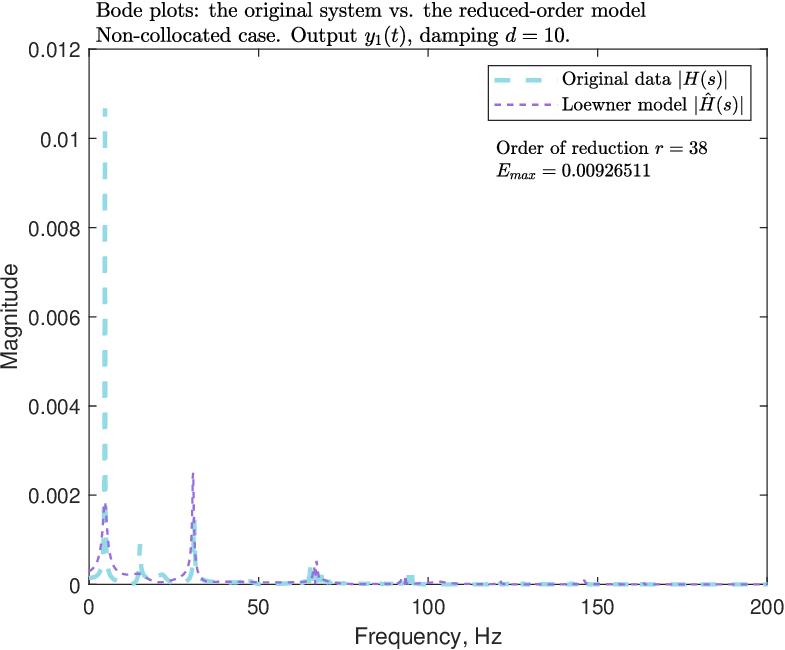}
            \caption{$d=10$}\label{fig:nc_y1_d10_r38}
        \end{subfigure}
    \end{minipage} \vspace{-1ex}
    \caption{Bode plots for the non-collocated configuration with output $y_1(t)$; $d=0.25$ (left) and $d=10$ (right).}\label{fig:nc_y1}
\end{figure}

\begin{figure}[!h]
    \begin{minipage}[b]{\linewidth} \centering
        \begin{subfigure}{0.49\linewidth} \centering
            \includegraphics[width=\linewidth]{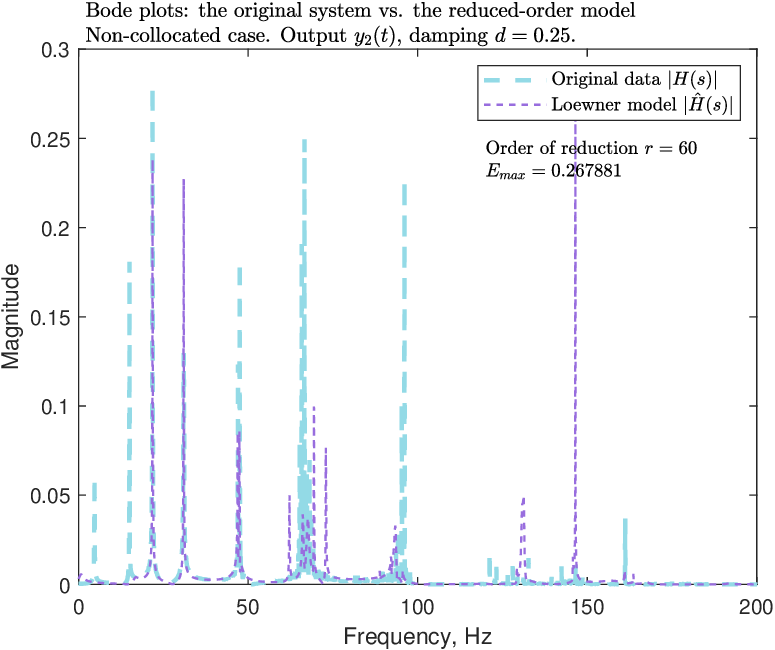}
            \caption{$d=0.25$}\label{fig:nc_y2_d025_r60}
        \end{subfigure} \hfill
        \begin{subfigure}{0.49\linewidth} \centering
            \includegraphics[width=\linewidth]{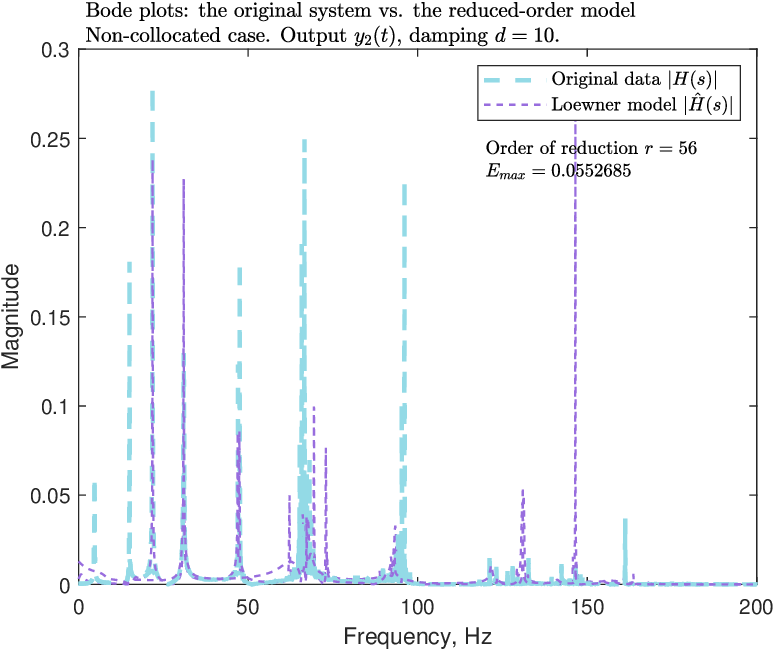}
            \caption{$d=10$}\label{fig:nc_y2_d10_r56}
        \end{subfigure}
    \end{minipage} \vspace{-1ex}
    \caption{Bode plots for the non-collocated configuration with output $y_2(t)$: $d=0.25$ (left) and $d=10$ (right).}\label{fig:nc_y2}
\end{figure}

The obtained graphs demonstrate good agreement between the original and reduced-order models in terms of the transfer function behavior on the imaginary axis, particularly in the vicinity of the poles.

\section{Conclusion}

The behavior of the system at the cutoff frequency described in Statement~\ref{re:ImEigs}, with account of the aforementioned dissipative property of the system, apparently indicates the following.
Although the passive stabilizer, implemented as a point damper through velocity feedback on the transverse displacement at the beam–body interface, shifts most of the spectrum into the left half-plane, the cutoff frequency remains strictly on the imaginary axis. In fact, $\omega_c$ always belongs to the spectrum of a simply supported single-span beam, as shown in~\cite{Cazzani2016}.
Consequently, the damped beam achieves marginal rather than asymptotic stability.
The eigenfunction corresponding to $s=\pm i\omega_c$ is
$\begin{pmatrix} W(x) \\ \Psi(x) \end{pmatrix} = \begin{pmatrix} 0 \\ \Psi(0) \end{pmatrix}$, and $\Psi(0)=\Psi(\ell)$,
i.e. the transverse deflection vanishes while the cross-sectional rotation settles into a spatially uniform constant.
This pure-shear vibration mode of Timoshenko beams have been first studied in~\cite{Downs1976}.
It is reasonable to envisage that as $t \to \infty$, the trajectory converges to a non-trivial limit manifold 
$\left\{ (u, v, f, g, p, q) \in {\cal H} \mid u(\cdot,t)\equiv 0,\: v(\cdot,t)\equiv 0,\: f(\cdot,t)\equiv C,\: g(\cdot,t)\equiv 0,\: p=0,\: q=0 \right\}$,
where $C \in \mathbb{R}$ is a constant determined by the initial data. The notation here corresponds to Remark~\ref{st:ress}.
Geometrically, this manifold defines a non-trivial steady state of pure static shear where the beam's unperturbed rectangular profile permanently deforms into a parallelogram.
This behavior, apparently, signifies partial-state asymptotic stability, where the kinetic energy dissipates and the system is stabilized with respect to the velocities and displacement, permanently retaining a residual static shear deformation.
From the physical point of view, such behavior of the mechanical system is justified by the design of the damping mechanism~--- the feedback affects directly transversal deflections but not the angle of rotation of cross-sections. Since the mode of oscillations corresponding to the cutoff frequency does not induce deflections, it becomes ``invisible'' to the stabilizer, thus remaining undamped.
A rigorous proof of the discussed properties requires a non-trivial spectral analysis of the corresponding differential operator. In particular, we consider the problem of partial-state stabilization as a promising direction for the authors' future investigation. Note that the Euler--Bernoulli beam does not demonstrate the discussed effect.

In this work, we studied the effect of localized dissipation, whereas the implementation of distributed damping models~\cite{ADW1986, LZ2018} represents a promising direction for future research.
Moreover, the analysis of alternative data-splitting schemes and their effects on the approximation accuracy of Loewner interpolants~\cite{KGA2021} is another prospective direction for studying the class of mechanical systems considered here.

\section*{Acknowledgment}
The second author was supported by a grant from the Simons Foundation (SFI-PD-Ukraine-00017674, Julia Kalosha).

\bibliographystyle{aomplain}
\bibliography{Bib}

\end{document}